\newtheorem{thm}{Theorem}
\newtheorem{cor}[thm]{Corollary}
\newtheorem{lema}[thm]{Lemma}
\newtheorem{prop}[thm]{Proposition}
\theoremstyle{definition}
\theoremstyle{remark}
\newtheorem{rem}[thm]{Remark}
\newcommand{\R}{\mathbb R}
\newcommand{\N}{\mathbb N}
\newcommand{\vbar}[1]{{\left\vert\kern-0.25ex\left\vert\kern-0.25ex\left\vert #1 
    \right\vert\kern-0.25ex\right\vert\kern-0.25ex\right\vert}}
\def\C{\mathbf {C}}
\newcommand{\ve}{\varepsilon}
\newcommand{\lam}{\lambda}
\title[Global H\"older regularity for eigenfunctions of the fractional $g$-Laplacian]{Global H\"older regularity for eigenfunctions of the fractional $g$-Laplacian}
\author{Juli\'an Fern\'andez Bonder, Ariel Salort and Hern\'an Vivas}
\address[JFB and AS]{Instituto de C\'alculo (IC), CONICET\\
Departamento de Matem\'atica, FCEN - Universidad de Buenos Aires\\
Ciudad Universitaria, Pabell\'on I, C1428EGA, Av. Cantilo s/n\\
Buenos Aires, Argentina}
\email[JFB]{jfbonder@dm.uba.ar}
\email[AS]{asalort@dm.uba.ar}
\address[HV]{Centro Marplatense de Investigaciones Matem\'aticas (UNMDP-CIC) \\
Departamento de Matem\'atica, FCEYN, UNMDP, De\'an Funes 3350, 7600, Mar del Plata, Argentina}
\email{havivas@mdp.edu.ar}
\subjclass{35J62; 35B65, 35P30}
\keywords{Fractional $g-$Laplacian; elliptic regularity; nonlinear eigenvalue problems}
\begin{document}
 
\begin{abstract}
We establish global H\"older regularity for eigenfunctions of the fractional $g-$Laplacian with Dirichlet boundary conditions where $g=G'$ and $G$ is a Young functions satisfying the so called $\Delta_2$ condition. Our results apply to more general semilinear equations of the form $(-\Delta_g)^s u  = f(u)$.  
\end{abstract}

\maketitle

\section{Introduction}	 

The aim of this note is to prove global H\"older regularity for solutions of the non-linear eigenvalue problem
\begin{equation}\label{eq.main}
\begin{cases}
(-\Delta_g)^s u  =\lam g(u) &\quad \text{ in } \Omega,\\
u = 0 &\quad \text{ in } \R^n \setminus \Omega
\end{cases}
\end{equation} 
where $\Omega\subset \R^n$ is an open and bounded set with Lipschitz boundary, $\lambda\in\R$, $g$ is the derivative of a Young function $G$ (see Section \ref{sec.preliminares} for definitions), $(-\Delta_g)^s$ the fractional $g-$Laplacian is given by
\[
(-\Delta_g)^su(x)=\text{p.v.}\int g\left(D_su(x,y)\right)\frac{dy}{|x-y|^{n+s}}
\]
and $D_su$ is the $s-$H\"older quotient defined by
\begin{equation}\label{eq.squotient}
D_su(x,y):=\frac{u(x)-u(y)}{|x-y|^s}.
\end{equation}

The fractional $g-$Laplacian operator is the natural generalization of the fractional $p-$Laplacian when a non-power behavior of the $s-$H\"older quotient is considered. These type of operators have received much attention in  recent years, see for instance \cite{FBS, BO, S1, SV, SCAB, BS,  FBPLS, FBSV-homog, JC, S} and the references in these articles. Observe that in the particular case that $G(t)=t^p$, $p>1$, the eigenvalue problem for the fractional $p-$Laplacian is recovered. 

The non-local, non-linear and non-homogeneous eigenvalue problem \eqref{eq.main} was treated in \cite{S, SV}, where existence of eigenvalues was proved.  Eigenvalues with other boundary conditions were studied in \cite{BS}. Recently, a homogeneous version of \eqref{eq.main} was dealt with in \cite{FBSV-homog}. The particular case of powers was studied in \cite{BP, FP,LL}, where $L^\infty$ bound of eigenfunctions was obtained.

The local version of \eqref{eq.main} was addressed for instance in \cite{autov1, autov2, ML}. In \cite{ML}, by appealing to the regularity 	theory of G. Lieberman, the authors  prove $C^{1,\alpha}$ regularity of the first eigenfunction. In this setting, lower bounds of eigenvalues were also proved in \cite{S1}.
  
Our goal in this manuscript is to study regularity of solutions of \eqref{eq.main} for a class of operators where the Young function $G$ satisfies that $G(t)$  is comparable with $tG'(t)$ (see condition \eqref{cond}) and %$G$ is sub-multiplicative (see condition \eqref{sub}) and 
$G$ is sub-critical in the sense of condition \eqref{condi}. This class of Young functions includes powers, powers multiplied by logarithms and sum of different powers, among others functions (see Section \ref{sec.preliminares} for further examples).

The desired regularity result will be achieved via the following a priori bound on the $L^\infty$ norm of $u$, which is the main result of this paper:

\begin{thm} \label{main}
Let $\mu>0$, $s\in(0,1)$, $1<p^-<p^+<\infty$ and $G$ a Young function satisfying 
\[
p^-\leq \frac{tg(t)}{G(t)} \leq p^+
\]
with $sp^+<n$ and $p^+\le \frac{np^-}{n-sp^-}$ (the Sobolev conjugate of $p^-$).

There exists a constant $C_0=C_0(n,s,p^\pm,\mu)>0$ such that if $u\in W^{s,G}_0(\Omega)$ is a weak solution of \eqref{eq.main} with $\int_\Omega G(|u|)\,dx=\mu$ then $u$ is bounded in $\Omega$ with
$$
\|u\|_{L^\infty(\Omega)} \leq C_0.
$$
\end{thm}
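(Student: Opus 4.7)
The plan is to run a De Giorgi iteration on the super-level sets of $u$, adapted to the fractional Orlicz--Sobolev setting. By symmetry it suffices to bound $u$ from above, so fix a level $M>0$ to be chosen and set
\[
c_k := M(1-2^{-k}), \qquad w_k := (u-c_k)_+, \qquad A_k := \{u > c_k\}, \qquad U_k := \int_\Omega G(w_k)\, dx.
\]
Then $w_k \in W^{s,G}_0(\Omega)$ (truncations preserve the space since $G$ is $\Delta_2$) and $U_0 \leq \mu$; the goal is to prove $U_k \to 0$ for a suitably chosen $M$, which forces $u\leq M$ a.e.\ on $\Omega$.

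\textbf{Energy inequality.} Using $w_{k+1}$ as test function in the weak form of \eqref{eq.main} and invoking the standard convexity inequality for the fractional $g$-Laplacian produces
\[
\iint_{\R^n\times\R^n} G\!\left(D_s w_{k+1}(x,y)\right)\frac{dx\,dy}{|x-y|^{n+s}}\; \leq\; C\lambda \int_\Omega g(u)\, w_{k+1}\, dx.
\]
On $A_{k+1}$ one writes $u = c_{k+1} + w_{k+1}$; using the $\Delta_2$ property of both $G$ and its complementary function (a consequence of $p^- \le tg(t)/G(t) \le p^+$) together with Young's inequality in Orlicz spaces, the right-hand side is controlled by $C\lambda\bigl(U_{k+1} + G(M)\,|A_{k+1}|\bigr)$, which after absorption yields a clean bound on the Gagliardo $G$-energy of $w_{k+1}$.

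\textbf{Sobolev gain and iteration.} The subcritical assumption $sp^+ < n$ yields a fractional Orlicz--Sobolev embedding $W^{s,G}_0(\Omega) \hookrightarrow L^{G^{\ast}_s}(\Omega)$, where the Sobolev conjugate $G^{\ast}_s$ dominates $G$ at infinity in a quantitative way. Hölder's inequality in Orlicz spaces then gives
\[
U_{k+1} \;\leq\; \|w_{k+1}\|_{L^{G^{\ast}_s}(A_{k+1})}^{\,p_*}\,|A_{k+1}|^{\delta},
\]
for some $\delta>0$ measuring the gap between $G$ and $G^{\ast}_s$. Combined with the Chebyshev estimate $|A_{k+1}| \leq U_k / G(M 2^{-(k+1)}) \leq C\,2^{kp^+}\,U_k/G(M)$ (for $M\ge 1$) and the energy inequality of the previous step, this produces a recursion of the form
\[
U_{k+1} \;\leq\; C^{\,k}\, G(M)^{-\delta}\, U_k^{1+\delta}.
\]
A standard iteration lemma then gives $U_k \to 0$ provided $G(M)$ is sufficiently large in terms of $\mu, n, s, p^\pm$ and the equation data; the same estimate applied to $-u$ yields the two-sided $L^\infty$ bound.

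\textbf{Main obstacle.} The delicate point is managing the interplay between the Young function $G$ and its power-type envelopes: the energy inequality, Chebyshev estimate and Sobolev embedding are naturally expressed through $G$, but extracting a strictly positive iteration exponent $\delta$ requires repeatedly converting between $G$-based and power-based quantities via the two-sided bound $p^- \leq tg(t)/G(t) \leq p^+$, and ensuring that the multiplicative factor $G(M)^{-\delta}$ in the recursion indeed beats the geometric factor $C^k$. A secondary issue is that $\lambda$ must be controlled in terms of $\mu$ and the structural constants; testing \eqref{eq.main} with $u$ and applying the Orlicz--Poincaré inequality gives the needed one-sided bound so that the dependence of $M$ on $\lambda$ can be absorbed into the final constant $C_0$.
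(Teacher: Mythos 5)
Your scheme is, in substance, the paper's own: a De Giorgi iteration on dyadic truncations, the energy estimate obtained by testing with $w_{k+1}$, Orlicz--H\"older with the conjugate pair $H=G^*\circ G^{-1}$ and $\tilde H$, the Chebyshev bound of Lemma \ref{chebyshev}, the embedding of Proposition \ref{embedding}, and the iteration Lemma \ref{sequence}. The only structural difference is cosmetic: you keep $u$ fixed and scale the truncation level $M$, tracking the factor $G(M)^{-\delta}$ in the recursion, while the paper fixes the level at $1$ and rescales $u$ by a large constant using \eqref{minmax2}; the two devices are equivalent ways of coping with the lack of homogeneity. Two presentational points: ``after absorption'' is a misnomer --- nothing can be absorbed into the Gagliardo energy on the left, and nothing needs to be, since $w_{k+1}\le w_k$ and Chebyshev already control the right-hand side; and the inequality $U_{k+1}\le \|w_{k+1}\|_{G^*}^{p_*}|A_{k+1}|^{\delta}$ is not literally H\"older but is exactly the content of Lemma \ref{lema.norma} together with Lemma \ref{cotaK} (the bound $\|\chi_A\|_{\tilde H}\le K(|A|)\le C\max\{|A|,|A|^{sq/n}\}$ for $q<p^-$, which uses \eqref{G*1} and $sp^+<n$). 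These assertions are true, so asserting them is a gap of detail, not of substance.

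The one genuine error is your closing claim that $\lambda$ can be controlled in terms of $\mu$ and the structural constants by testing \eqref{eq.main} with $u$ and invoking a Poincar\'e inequality. That test gives $\iint g(D_su)D_su\,d\mu=\lambda\int_\Omega g(u)u\,dx$, and Poincar\'e bounds the left-hand side from \emph{below} in terms of $\Phi_G(u)$, so you obtain only a lower bound on $\lambda$; no upper bound follows from the normalization $\Phi_G(u)=\mu$ (think of higher eigenvalues, whose $L^\infty$ norms are unbounded at fixed normalization already for $G(t)=t^2$). Consequently the threshold for $G(M)$ in your recursion, which inherits the factor $\lambda$ from the energy estimate, cannot be made $\lambda$-free, and the bound you produce depends on $\lambda$ as well --- exactly as in the paper's proof, where the constants $C(\lambda,p^\pm)$ appear and the $\lambda$-dependence resurfaces in Corollary \ref{cor}. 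So you should simply keep $\lambda$ among the data on which $C_0$ depends rather than attempt to remove it by this (invalid) step; with that correction your outline matches the paper's argument.
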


\begin{rem}
We point out that the restriction $p^+\le \frac{np^-}{n-sp^-}$ is only used in the proof of Lemma \ref{lem.H} and is of technical character. An improvement in the proof of that Lemma would imply the elimination of that restriction in the regularity result.
\end{rem}

Global regularity is a consequence of the previous theorem and the regularity result recently proved by the authors \cite[Theorem 1.1]{bonder2020interior}; it basically states that solutions of an equation with bounded right hand side are globally H\"older continuous. We point out that these result hold for values of $p^->2$ and a convex function $g$; in the case of powers, that is when $G(t)=t^p$, these conditions mean that we are dealing with the degenerate (or convex) scenario. The precise statement is as follows:

\begin{thm}\label{thm.reg}
Let $s\in(0,1)$, $2<p^-<p^+<\infty$ and $G$ a Young such that $g=G'$ is convex and satisfies
\begin{equation} \label{cond.lib}
 p^--1\leq\frac{tg'(t)}{g(t)}\leq p^+-1.
\end{equation}
Assume that $\Omega$ is an open bounded set with $C^{1,1}$ boundary. Finally, let $h\in L^\infty(\Omega)$.

There exist constants $\alpha=\alpha(s,n,p^\pm,\|h\|_{L^\infty(\Omega)})\in(0,s]$ and $C_1=C_1(s,n,p^\pm,\|h\|_{L^\infty(\Omega)},[\partial\Omega]_{C^{1,1}})>0$ such that if $u\in W^{s,G}_0(\Omega)$ is a weak solution of 
\[
\left\{ \begin{array}{cccl}
(-\Delta_g)^s  u  & = & h &  \textrm{ in }\Omega \\
u & = & 0  &\textrm{ in }\R^n\setminus\Omega,
\end{array} \right.
\]
then $u\in C^\alpha(\overline\Omega)$ and 
\[
\| u \|_{C^\alpha (\overline\Omega)}\leq  C_1.
\]
\end{thm}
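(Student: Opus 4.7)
The plan is to deduce the result from an interior H\"older estimate combined with a boundary barrier argument, assembled via a standard chain-of-balls gluing. The overall strategy mirrors the program developed for the fractional $p$-Laplacian by Iannizzotto--Mosconi--Squassina and Di Castro--Kuusi--Palatucci, with the power homogeneity replaced by the Orlicz control encoded in \eqref{cond.lib}. As a preliminary step, an $L^\infty$ bound $\|u\|_{L^\infty(\Omega)}\le C$ can be obtained by a Moser iteration on level sets of $u$, analogous in structure to the one used in the proof of Theorem \ref{main} but driven by $\|h\|_{L^\infty(\Omega)}$ rather than by the modular normalization.

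First I would prove interior H\"older regularity. The main ingredients are: (i) a Caccioppoli-type energy estimate phrased in terms of the modular $\int G$ rather than $\int |\cdot|^p$; (ii) a nonlocal logarithmic estimate for shifted truncations of $u$, in the spirit of the Di Castro--Kuusi--Palatucci logarithmic lemma; (iii) a nonlocal De Giorgi / Moser iteration yielding an oscillation decay
\[
\operatorname{osc}_{B_r} u \;\le\; \theta\,\operatorname{osc}_{B_{2r}} u + C r^\beta\bigl(1+\|h\|_{L^\infty(\Omega)}\bigr)
\]
on concentric interior balls, for some $\theta\in(0,1)$ and $\beta>0$ depending only on $n,s,p^\pm$. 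The bounds in \eqref{cond.lib} provide the quasi-homogeneity $g(\lambda t)\asymp \lambda^{p^\pm-1} g(t)$ for $\lambda>0$ needed to close the iteration; together with the convexity of $g$ and the $\Delta_2$ structure this produces $u\in C^{\alpha_0}_{\mathrm{loc}}(\Omega)$ for some $\alpha_0\in(0,s]$.

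Next I would upgrade the estimate up to the boundary. Exploiting the $C^{1,1}$ regularity of $\partial\Omega$, every boundary point satisfies a uniform interior and exterior ball condition. On these tangent balls I would construct an explicit supersolution $\Psi$ comparable to $d_\Omega(x)^{s}$ in a tubular neighbourhood of $\partial\Omega$ and verify that $(-\Delta_g)^s \Psi \ge \|h\|_{L^\infty(\Omega)}$ there. Such barriers can be built from radial profiles on the half-space or the ball and calibrated using the two-sided bound \eqref{cond.lib}. The weak comparison principle for $(-\Delta_g)^s$ then yields $|u(x)|\le C d_\Omega(x)^{s}$ in a neighbourhood of $\partial\Omega$. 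Combining this boundary decay with the interior H\"older estimate through a two-point argument (distinguishing the cases of two interior points, two near-boundary points, or one of each) produces the global bound $\|u\|_{C^\alpha(\overline\Omega)}\le C_1$ with $\alpha=\min\{\alpha_0,s\}$.

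The main obstacle is the interior step. Classical De Giorgi / Moser schemes for the fractional $p$-Laplacian rely on the exact scaling $g(\lambda t)=\lambda^{p-1}g(t)$ to normalize the equation at each dyadic level and iterate; in the Orlicz setting this scaling fails and must be replaced by the weaker two-sided bound \eqref{cond.lib}. This forces one to work with the modular functional $\int G$ throughout, to track the dependence of the constants on both $p^-$ and $p^+$ rather than on a single exponent, and to handle the nonlocal tail term so that its contribution remains of order $r^\beta$ after each iteration step. Once these technical adjustments are in place, the boundary barrier construction and the gluing argument in the second step are comparatively direct adaptations of the fractional $p$-Laplacian theory.
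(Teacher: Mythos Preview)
This theorem is not proved in the present paper at all. It is quoted verbatim as a result from the authors' companion paper \cite{bonder2020interior} (``Interior and up to the boundary regularity for the fractional $g$-Laplacian: the convex case''), and is used here only as a black box to pass from the $L^\infty$ bound of Theorem~\ref{main} to the H\"older regularity in Corollary~\ref{cor}. So there is no proof in the paper for your proposal to be compared against.

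That said, your outline is a faithful description of the program that underlies \cite{bonder2020interior}: an interior oscillation-decay estimate in the spirit of Di Castro--Kuusi--Palatucci / Iannizzotto--Mosconi--Squassina, combined with a boundary barrier comparable to $d_\Omega^s$ built from the $C^{1,1}$ ball condition, and a standard interior/boundary gluing. You have also correctly identified the main technical point, namely that the exact scaling $g(\lambda t)=\lambda^{p-1}g(t)$ is replaced by the two-sided control \eqref{cond.lib}, which forces all the iteration steps to be carried out at the level of modulars with constants depending on both $p^-$ and $p^+$. As a sketch of the argument in \cite{bonder2020interior} your proposal is accurate; but for the purposes of the present paper nothing beyond the citation is required.
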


Indeed, since Theorem \ref{main} states that eigenfunctions are bounded, we immediately have the following corollary:
\begin{cor}\label{cor}
Assume that the hypothesis of Theorem \ref{main} hold, $p^->2$, $g=G'$ is convex and satisfies \eqref{cond.lib} and $\Omega$ has $C^{1,1}$ boundary. 

There exists $\alpha=\alpha(s,n,p^\pm,\lambda,\mu)\in(0,s]$ and $C_3=C_3(s,n,p^\pm,\lambda,\mu,[\partial\Omega]_{C^{1,1}})>0$  such that if $u$ is a weak solution of \eqref{eq.main} then $u\in C^\alpha(\overline\Omega)$ and 
\[
\| u \|_{C^\alpha (\overline\Omega)}\leq  C_3.
\]
\end{cor}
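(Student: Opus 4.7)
My plan is to obtain the corollary as a direct bootstrap from Theorem \ref{main} and Theorem \ref{thm.reg}, with essentially no new analytic input; the work consists in combining the two statements and keeping track of the constants.

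First I would invoke Theorem \ref{main}. All of its hypotheses are included among those of the corollary, so it produces the pointwise bound
\[
\|u\|_{L^\infty(\Omega)} \leq C_0, \qquad C_0 = C_0(n,s,p^\pm,\mu).
\]

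Next I would read the eigenvalue equation \eqref{eq.main} as $(-\Delta_g)^s u = h$ with $h(x) := \lambda\, g(u(x))$. Since $g = G'$ is convex (hence continuous and monotone) by hypothesis and $u$ is pointwise bounded by $C_0$, one obtains $h \in L^\infty(\Omega)$ with
\[
\|h\|_{L^\infty(\Omega)} \leq |\lambda|\, g(C_0).
\]
The remaining hypotheses of the corollary --- namely $p^->2$, convexity of $g$, condition \eqref{cond.lib}, and $\partial\Omega \in C^{1,1}$ --- are exactly the ones required by Theorem \ref{thm.reg}. I would then apply that theorem to $u$ with this right-hand side $h$, which yields $\alpha \in (0,s]$ and $C_1 > 0$ with $\|u\|_{C^\alpha(\overline\Omega)} \leq C_1$. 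Substituting the bound on $\|h\|_{L^\infty(\Omega)}$ into the dependencies stated in Theorem \ref{thm.reg} produces exactly $\alpha = \alpha(s,n,p^\pm,\lambda,\mu)$ and $C_3 = C_3(s,n,p^\pm,\lambda,\mu,[\partial\Omega]_{C^{1,1}})$, as claimed.

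The only step that deserves a moment's care is this final bookkeeping: one must check that $g(C_0)$ is indeed controlled by $n,s,p^\pm,\mu$ (with the Young function $G$ treated as implicitly fixed by the problem data, as is done throughout the paper). This is automatic, since $C_0$ depends only on those parameters by Theorem \ref{main} and $g$ is a fixed continuous function, so $g(C_0)$ inherits the same dependence. Accordingly, I do not anticipate any genuine obstacle; the proof is purely a matter of stringing the two earlier results together.
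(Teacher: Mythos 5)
Your proposal is correct and is exactly the paper's argument: set $h:=\lambda g(u)$, note it is in $L^\infty(\Omega)$ by Theorem \ref{main}, and apply Theorem \ref{thm.reg}. The constant bookkeeping via $\|h\|_{L^\infty(\Omega)}\leq|\lambda|\,g(C_0)$ is the same straightforward step the paper leaves implicit.
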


The proof of Theorem \ref{main} is based on De Giorgi's  iteration scheme. This technique is a powerful tool in regularity analysis of elliptic and parabolic PDEs and it has been shown to be very versatile and adaptable to different contexts. 

We would like to highlight that our setting presents specific challenges when trying to apply the aforementioned technique. One of the main drawbacks to be overtaken in order to apply the iteration scheme is the possible lack of homogeneity of the equation; this induces rather important technical difficulties as nor the uniformly elliptic argument nor the degenerate ($p-$Laplacian) one apply mutatis mutandis. An extra, and related, difficulty is added by the fact that weak solutions satisfy an equation in terms of modular but not in terms of norms, however, both embedding theorems and H\"older type inequalities hold for norm of functions and not for modulars.

\medskip
	
A slight modification of our arguments gives regularity of nonlinear problems of the form 
\begin{equation}\label{eq.main.f}
\begin{cases}
(-\Delta_g)^s u  = f(u) &\quad \text{ in } \Omega,\\
u = 0 &\quad \text{ in } \R^n \setminus \Omega
\end{cases}
\end{equation} 
where the nonlinearity satisfies $f=F'$ for $F$ a Young function such that
\[
\eta^-\leq\frac{tf(t)}{F(t)}\leq \eta^+.
\]
together with the sub-critical constrain $F\prec\prec G^*$, that is  
\begin{equation}\label{eq.subcrit}
\lim_{t\to\infty} \frac{F(kt)}{G^*(t)}=0;
\end{equation}
this condition is enough to ensure that the embeddings hold. Here $G^*$ is the Sobolev Young function defined in \eqref{G*1}. Condition \eqref{eq.subcrit} holds in particular if $\eta^+ \leq (p^-)^*$, the Sobolev conjugate of $p^-$ which is the lower bound 
\[
(p^-)^*\leq \frac{t(G^*)'(t)}{G^*(t)}
\]
see \cite{Fukagai}.

\begin{thm}\label{thm.semilinear}
Assume that the hypothesis of Theorem \ref{main} hold and that $F$ is a Young function satisfying \eqref{eq.subcrit}.

There exists $C_4=C(n,s,p^\pm,\eta^\pm)>0$ such that if $u\in W^{s,G}_0(\Omega)$ is a weak solution of \eqref{eq.main.f} then $u$ is bounded and 
$$
\|u\|_{L^\infty(\Omega)} \leq C_4.
$$

If additionally the hypothesis of Theorem \ref{thm.reg} are satisfied, there exist $\alpha=\alpha(n,s,p^\pm,\eta^\pm)\in (0,s]$ and $C_5=C_5(n,s,p^\pm,\eta^\pm,[\partial\Omega]_{C^{1,1}})>0$ such that $u\in C^\alpha(\overline\Omega)$ and 
$$
\|u\|_{C^\alpha(\overline\Omega)} \leq C_5.
$$
\end{thm}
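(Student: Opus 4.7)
The plan is to follow the De Giorgi iteration scheme that proves Theorem \ref{main}, replacing the right--hand side $\lambda g(u)$ by $f(u)$ and exploiting the strict subcriticality \eqref{eq.subcrit} in place of the fixed modular normalization $\int_\Omega G(|u|)\,dx = \mu$. For the Hölder statement, once boundedness is available, the right--hand side $f(u)$ is bounded and Theorem \ref{thm.reg} applies directly, so essentially all the work is in establishing $\|u\|_\infty \leq C_4$.

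The first step I would carry out is to produce an a priori bound for the modular Gagliardo seminorm of $u$ from the equation itself, since, unlike the eigenvalue case, no normalization is imposed a priori. Testing the weak formulation of \eqref{eq.main.f} with $u$ and combining the lower bound $p^- \leq tg(t)/G(t)$ with the upper bound $tf(t)/F(t) \leq \eta^+$ yields
$$
p^- \iint_{\R^n\times\R^n} G(D_s u(x,y))\,\frac{dx\,dy}{|x-y|^n} \leq \eta^+ \int_\Omega F(|u|)\,dx.
$$
Because $F \prec\prec G^*$, the Sobolev--Orlicz embedding $W^{s,G}_0(\Omega) \hookrightarrow L^F(\Omega)$ is compact. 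Translating this into modular language using the $\Delta_2$ condition gives, for every $\varepsilon>0$, an inequality of the form
$$
\int_\Omega F(|u|)\,dx \leq \varepsilon \iint G(D_s u)\,\frac{dx\,dy}{|x-y|^n} + C(\varepsilon),
$$
with $C(\varepsilon)$ depending only on $n,s,p^\pm,\eta^\pm$. Choosing $\varepsilon$ small enough to absorb the first term on the right of the previous inequality, I obtain $\int_\Omega F(|u|)\,dx \leq M$ with $M=M(n,s,p^\pm,\eta^\pm)$. This bound plays the role of the hypothesis $\int_\Omega G(|u|)\,dx=\mu$ used in Theorem \ref{main}.

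With this estimate in place, I would run the De Giorgi scheme of Theorem \ref{main} verbatim. Testing the equation with the truncations $(u-k)_+$ (and symmetrically for the negative part), using the pointwise estimate $|f(t)|\leq \eta^+ F(|t|)/|t|$ for the right--hand side, and applying the subcritical Sobolev inequality together with the uniform control $\int_\Omega F(|u|)\,dx \leq M$, one arrives at the standard nonlinear recursion for the level--set quantities, of the type $a_{k+1} \leq C\, b^{k} a_k^{1+\delta}$ for some $\delta>0$. A classical iteration lemma then produces a finite $k_0$ with $a_{k_0}=0$, yielding $\|u\|_{L^\infty(\Omega)} \leq C_4$ with the claimed dependence. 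Since $u$ is now bounded, $h := f(u) \in L^\infty(\Omega)$ with norm controlled by $C_4$ and $\eta^\pm$, and applying Theorem \ref{thm.reg} delivers the $C^\alpha$ bound $\|u\|_{C^\alpha(\overline\Omega)} \leq C_5$.

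The main obstacle I anticipate is the first step: converting the compact embedding $W^{s,G}_0 \hookrightarrow L^F$, which lives at the level of Luxemburg norms, into a modular inequality with explicit constants depending only on the growth parameters. This modular--norm juggling is the typical source of technical difficulty in this non--homogeneous setting, and one has to be careful to keep the constants independent of $u$ and of the $G$--modular of $u$, using the $\Delta_2$ property and the two--sided Simonenko bounds $p^- \leq tg(t)/G(t)\leq p^+$. Once this is accomplished, the rest of the iteration is a direct adaptation of the proof of Theorem \ref{main}.
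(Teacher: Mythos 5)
Your overall architecture (De Giorgi iteration for the $L^\infty$ bound, then Theorem \ref{thm.reg} applied to $h=f(u)\in L^\infty(\Omega)$) matches the paper, which indeed proves this theorem ``mutatis mutandis'' from Theorem \ref{main} and Corollary \ref{cor}. But your first step has a genuine gap. The inequality you want,
\[
\int_\Omega F(|u|)\,dx \le \ve \iint_{\R^n\times\R^n} G(|D_s u|)\,d\mu + C(\ve),
\]
with $C(\ve)$ depending only on the structural parameters and not on $u$, does not follow from compactness of the embedding $W^{s,G}_0(\Omega)\hookrightarrow L^F(\Omega)$ and is in fact false in general. Compactness yields an Ehrling-type inequality containing a lower-order term in $u$ (for instance $C_\ve\|u\|_{L^1(\Omega)}$), never a constant; and at the modular level the Sobolev control of $\int_\Omega F(|u|)$ by $\Phi_{s,G}(u)$ is superlinear whenever $F$ grows faster than $G$, which the subcriticality \eqref{eq.subcrit} allows. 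Concretely, take $G(t)=t^p$, $F(t)=t^q$ with $p<q<p^*=np/(n-sp)$ and $u=t\phi$ for a fixed $\phi\in W^{s,G}_0(\Omega)$: the left-hand side grows like $t^q$ while the right-hand side grows like $\ve t^p+C(\ve)$, so the inequality fails for $t$ large and the absorption step collapses. This is not a repairable technicality: in that model case $(-\Delta_p)^s u=|u|^{q-2}u$ has solutions of arbitrarily large energy and arbitrarily large $L^\infty$ norm, so no bound on $\int_\Omega F(|u|)$ (hence on $\|u\|_{L^\infty(\Omega)}$) independent of the solution can be extracted from the equation alone.

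The paper sidesteps this because the hypotheses of Theorem \ref{main} are assumed to hold, i.e. the normalization $\int_\Omega G(|u|)\,dx=\mu$ is carried over, and the proof simply repeats the argument of Theorem \ref{main}: test with the truncations $w_k$, use $tf(t)\le\eta^+F(t)$ in place of \eqref{cond} for the right-hand side, use \eqref{eq.subcrit} to run the H\"older/embedding step with $F$ in place of $G$, and reduce to the small-modular case by the same rescaling $u/C$, the constants then being inherited from that normalization. If you keep such a normalization (or any a priori bound on a norm of $u$) as an input, the rest of your argument — the recursion $a_{k+1}\le \bar C\tilde C^{\,k+1}a_k^{1+\delta}$, Lemma \ref{sequence} (which gives $a_k\to0$, not $a_{k_0}=0$ for finite $k_0$), and the final application of Theorem \ref{thm.reg} — goes through as in the paper.
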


We would like to highlight the recent work \cite{CSAB} in which regularity estimates for quasilinear equations driven by the $g-$Laplacian are addressed via a Moser type approach. These results bear some resemblance with the ones presented here. However, we point out that they assume a stronger assumptions on the Young function $G$, i.e., the submultiplicativity condition (condition known as the $\Delta'$ condition). Moreover, in \cite{CSAB} the eigenvalue problem is only covered  for the case in which $g$ is equivalent to a power. We also allow for a broader range of growth behaviors in the semilinear setting \eqref{eq.main.f}.

The paper is organized as follows: in Section \ref{sec.preliminares} we give the necessary definitions and provide some examples of Young functions that fit our setting; in Section \ref{sec.tech} we prove some technical results that will be used in the proof of Theorem \ref{main}; in Section \ref{sec.main} we prove Theorem \ref{main} (and as a consequence of Theorem \ref{thm.reg} we get Colorllary \ref{cor}).

\section{Preliminaries} \label{sec.preliminares}

An application $G\colon[0,\infty)\longrightarrow [0,\infty)$ is said to be a  \emph{Young function} if it admits the integral representation 
\[
G(t)=\int_0^t g(s)\,ds,
\] 
where the right-continuous function $g$ defined on $[0,\infty)$ has the following properties:
\begin{itemize}
\item[(i)] $g(0)=0$, \quad $g(t)>0 \text{ for } t>0$, 
\item[(ii)]  $g \text{ is nondecreasing on } (0,\infty)$, 
\item[(iii)]  $\lim_{t\to\infty}g(t)=\infty$.
\end{itemize}
From these properties it is easy to see that a Young function $G$ is continuous, nonnegative, strictly increasing and convex on $[0,\infty)$.  Without of loss generality we can assume $G(1)=1$ and we extend $G$ to negative values in an even fashion: $G(-t)=G(t)$. 

We will assume throughout the paper that $1<p^-<p^+<\infty$ and that $G$ satisfies 
\begin{equation} \label{cond} 
p^-\leq \frac{tg(t)}{G(t)} \leq p^+.
\end{equation}
%which, integrating the inequalities, amounts to
%\begin{equation}\label{cond2}
%t^{p^-} \leq G(t) \leq t^{p^+}.
%\end{equation}
Condition \eqref{cond} is equivalent to ask $G$ and $\tilde G$ to satisfy the \emph{$\Delta_2$ condition or doubling condition}, i.e., 
\[
G(2t)\leq 2^{p^+} G(t), \qquad \tilde G(2t) \leq 2^{(p^-)'} \tilde G(t),
\]
(we usually denote $\C:=2^{p^+}$) where the \emph{complementary} function of a Young function $G$ is the Young function $\tilde G$ defined as
$$
\tilde G(t)=\sup\{ta-G(a)\colon a>0\}.
$$
This condition allows to split sums as
\begin{equation}\label{eq.delta2'}
G(a+b)\leq \tfrac{\C}{2} (G(a)+G(b)).
\end{equation}

The following lemma will be useful often; its proof is elementary so we omit it.
\begin{lema}
For $\alpha\in[0,1]$ and  $t\geq 0$
$$
G(\alpha t) \leq \alpha G(t), 
$$
and for $\alpha\geq 1$ and  $t\geq 0$
$$
G(\alpha t)\geq \alpha G(t). 
$$
More generally,  for any, $\alpha,t\geq 0$
\begin{equation}\label{minmax2} 
G(t)\min\{\alpha^{p^-}, \alpha^{p^+}\} \leq G(\alpha t) \leq G(t)\max\{\alpha^{p^-}, \alpha^{p^+}\},
\end{equation}
\begin{equation}\label{minmax3} 
G^{-1}(t)\min\{\alpha^\frac{1}{p^-}, \alpha^\frac{1}{p^+}\} \leq G^{-1}(\alpha t) \leq G^{-1}(t)\max\{\alpha^\frac{1}{p^-}, \alpha^\frac{1}{p^+}\}.
\end{equation}
\end{lema}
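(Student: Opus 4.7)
The first two displayed inequalities are a direct consequence of convexity together with $G(0)=0$. For $\alpha\in[0,1]$ I would write $\alpha t=\alpha t+(1-\alpha)\cdot 0$ and apply convexity to get $G(\alpha t)\leq \alpha G(t)+(1-\alpha)G(0)=\alpha G(t)$. For $\alpha\geq 1$ the same trick with $t=\tfrac{1}{\alpha}(\alpha t)+(1-\tfrac{1}{\alpha})\cdot 0$ yields $G(t)\leq \tfrac{1}{\alpha}G(\alpha t)$, which rearranges to the desired inequality. No use of \eqref{cond} is needed for these two.

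For the sharper bound \eqref{minmax2} my plan is to exploit \eqref{cond} in differential form. Fixing $t>0$ and setting $\varphi(\alpha):=G(\alpha t)$ for $\alpha>0$, I compute
\[
\frac{d}{d\alpha}\log\varphi(\alpha)=\frac{t\,g(\alpha t)}{G(\alpha t)}=\frac{1}{\alpha}\cdot\frac{(\alpha t)g(\alpha t)}{G(\alpha t)},
\]
so that \eqref{cond} gives $p^-/\alpha\leq (\log\varphi)'(\alpha)\leq p^+/\alpha$ for all $\alpha>0$. Integrating from $1$ to $\alpha$ when $\alpha\geq 1$ yields $p^-\log\alpha\leq \log\varphi(\alpha)-\log\varphi(1)\leq p^+\log\alpha$, i.e.\ $\alpha^{p^-}G(t)\leq G(\alpha t)\leq \alpha^{p^+}G(t)$. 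Integrating from $\alpha$ to $1$ when $\alpha\in(0,1]$ gives the reversed pair $\alpha^{p^+}G(t)\leq G(\alpha t)\leq \alpha^{p^-}G(t)$. Taking minima and maxima in the two cases produces \eqref{minmax2} (and the boundary cases $\alpha=0$ or $t=0$ are trivial).

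For \eqref{minmax3} I would deduce it from \eqref{minmax2} by a substitution. Write $v:=G^{-1}(t)$ and $u:=G^{-1}(\alpha t)$ and set $\gamma:=u/v$, so that $u=\gamma v$ and $G(u)/G(v)=\alpha$. Applying \eqref{minmax2} with base point $v$ and scaling factor $\gamma$ gives
\[
\min\{\gamma^{p^-},\gamma^{p^+}\}\leq \alpha \leq \max\{\gamma^{p^-},\gamma^{p^+}\}.
\]
If $\alpha\geq 1$ then $\gamma\geq 1$ (since $G$ is strictly increasing), hence $\gamma^{p^-}\leq \alpha\leq\gamma^{p^+}$, yielding $\alpha^{1/p^+}\leq \gamma\leq \alpha^{1/p^-}$; if $\alpha\in[0,1]$ then $\gamma\leq 1$ and we get $\alpha^{1/p^-}\leq \gamma\leq \alpha^{1/p^+}$. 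Multiplying through by $v=G^{-1}(t)$ and taking min/max over the two exponents gives \eqref{minmax3}.

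The only mildly delicate point is the case split $\alpha\lessgtr 1$, which governs whether $\alpha^{p^-}$ or $\alpha^{p^+}$ is the larger exponent; once this is handled carefully the argument is purely elementary, which is presumably why the authors elected to omit it. No embedding, inversion, or duality beyond \eqref{cond} and strict monotonicity of $G$ is used.
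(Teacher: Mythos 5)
Your proof is correct; the paper itself omits the argument as elementary, and what you write is the standard derivation: convexity with $G(0)=0$ for the first two inequalities, integration of the bound $p^-/\alpha\leq(\log G(\alpha t))'\leq p^+/\alpha$ coming from \eqref{cond} for \eqref{minmax2}, and the substitution $\gamma=G^{-1}(\alpha t)/G^{-1}(t)$ to transfer \eqref{minmax2} to \eqref{minmax3}. The only point worth a word is that $g$ is merely right-continuous and nondecreasing, so $G$ need not be differentiable everywhere; but $G$ is convex and locally Lipschitz, hence $\log\varphi$ is absolutely continuous on compact subintervals of $(0,\infty)$ with derivative $tg(\alpha t)/G(\alpha t)$ a.e., which is all the integration step requires, so your argument goes through unchanged.
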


\medskip

Examples of Young functions satisfying the assumptions of Theorem \ref{main} include:
\begin{itemize}
\item $G(t)=t^p$, $t\geq0$, $p>1$;
\item $G(t)=t^p (1+|\log t |)$, $t\geq0$, $p>1$;
\item $G(t)= t^p \chi_{(0,1]}(t) + t^q \chi_{(1,\infty)}(t)$, $t\geq0$, $p,q>1$;
\item $G(t) = t^p + t^q$, $t\geq0$, $p,q>1$;
\item $G(t)$ given by the complementary function to $\tilde G(t)=(1+t)^{\sqrt{\log(1+t)}}-1$, $t\geq0$;
\item $G_1\circ \ldots\circ G_m$, $\max\{G_1,\ldots,G_m\}$ and $\sum_{j=1}^ma_jG_j$ where $G_j$ is a Young function and $a_j\geq 0$ for $j=1,\ldots,m$.
\end{itemize}

We will assume also that 
\begin{equation} \label{condi}
\displaystyle\int_{0}^{1}\frac{G^{-1}(\tau)}{\tau^{1+\frac{s}{n}}}\:d\tau<\infty\ \quad \text{and}\quad
\displaystyle\int_{1}^{+\infty}\frac{G^{-1}(\tau)}{\tau^{1+\frac{s}{n}}}\:d\tau=\infty
\end{equation}
which are the conditions necessary for the Orlicz-Sobolev embeddings to hold (see Proposition \ref{embedding}). We also consider the Young function $G^*$ defined as
\begin{equation} \label{G*1}
(G^*)^{-1}(t):=\int_0^t \frac{G^{-1}(\tau)}{\tau^\frac{n+s}{n}}\,d\tau.
\end{equation}
Condition \eqref{condi} is fulfilled in particular when
\[
sp^+<n.
\]

From now on, the modulars in $L^G(\Omega)$ and $W^{s,G}(\Omega)$ will be denoted as
$$
\Phi_G(u):=\int_\Omega G(|u|)\,dx \qquad \Phi_{s,G}(u):=\iint_{\R^n\times \R^n} G(|D_s u|)\,d\mu,
$$
respectively where the notation 
\[
d\mu:=\frac{dxdy}{|x-y|^n}
\]
and $D_s u$ defined in \eqref{eq.squotient} will be used throughout. Over the space 
$$
W^{s,G}(\Omega):=\{u\colon \R^n\to \R \text{ measurable s.t. } \Phi_{s,G}(u)+\Phi_G(u)<\infty\}
$$ 
we define the norm 
\[
\|u\|_{s,G} := \|u\|_G + [u]_{s,G},
\]
where
\[
\|u\|_G :=\inf\left\{\lambda>0\colon \Phi_G\left(\frac{u}{\lambda}\right)\le 1\right\}
\]
and
\[
[u]_{s,G} :=\inf\left\{\lambda>0\colon \Phi_{s,G}\left(\frac{u}{\lambda}\right)\le 1\right\}.
\]
is the {\em $(s,G)-$Gagliardo seminorm}. 

We also denote 
$$
W^{s,G}_0(\Omega):=\{u\in W^{s,G}(\Omega)\colon u=0\text{ a.e. in } \R^n\setminus \Omega\}.
$$
In this space $[\cdot]_{s,G}$ turns out to be an equivalent norm. 

For the proof of the aforementioned facts and an introduction to fractional Orlicz-Sobolev spaces we refer to \cite{FBS, FBPLS}. The following embedding is proved in \cite{BO}.  

\begin{prop}[Embedding] \label{embedding}
Let $\Omega\subset \R^n$ be a Lipschitz domain and let $G$ be a Young function satisfying \eqref{cond} and \eqref{condi}, then there is a positive constant $C$ such that
$$
\|u\|_{G^*} \leq C \|u\|_{s,G}.
$$
\end{prop}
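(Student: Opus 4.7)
The plan is to prove Proposition \ref{embedding} by a Maz'ya--Hedberg-type truncation argument, generalising the classical proof of $W^{s,p}\hookrightarrow L^{p^*}$ to the non-homogeneous Orlicz setting. The strategy has three parts: (i) establish a weak-type estimate controlling the measure of the super-level sets $A_t := \{|u|>t\}$ in terms of $[u]_{s,G}$; (ii) exploit the $1$-Lipschitz stability of $[\,\cdot\,]_{s,G}$ under truncations of $u$; and (iii) recover the Luxemburg norm $\|u\|_{G^*}$ by inserting the weak-type estimate into a layer-cake representation, recognising the kernel $G^{-1}(\tau)/\tau^{1+s/n}$ that appears in the definition \eqref{G*1} of $G^*$.

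More concretely, I would first prove a weak bound of the form
$$
G\!\left(\frac{t}{[u]_{s,G}\,|A_t|^{s/n}}\right)\,|A_t|\,\lesssim\,1
$$
by applying a fractional Orlicz--Poincar\'e-type inequality to the truncation $v_t := (|u|-t)_+\wedge t$. Since $v_t\equiv t$ on $A_{2t}$ and vanishes outside the support of $u$, this localises the estimate to the super-level set, and, after inversion via \eqref{minmax3}, yields a dyadic bound on $|A_{t_k}|$ with $t_k=2^k$. I would then assemble these dyadic contributions via
$$
\Phi_{G^*}\!\left(\frac{u}{\lambda}\right) \;=\; \int_0^\infty (G^*)'(r)\,\bigl|\{|u|/\lambda > r\}\bigr|\,dr,
$$
with $\lambda = C\,[u]_{s,G}$ and the change of variables $\tau = 1/|A_t|$; the right-hand side collapses to a multiple of $\int_0^1 G^{-1}(\tau)/\tau^{1+s/n}\,d\tau$, which is finite by the first half of \eqref{condi}. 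Choosing $C$ large enough then forces $\Phi_{G^*}(u/\lambda)\le 1$, which by the Luxemburg characterization gives $\|u\|_{G^*}\le C\,[u]_{s,G}\le C\|u\|_{s,G}$.

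The main obstacle I anticipate is the lack of homogeneity of $G$. In the power case $G(t)=t^p$ one can freely normalise $[u]_{s,G}=1$ and identify the critical exponent by pure scaling; here only the two-sided power bounds \eqref{minmax2}--\eqref{minmax3} are available, so steps that would be a single scaling in the power case bifurcate into the $\alpha\le 1$ and $\alpha\ge 1$ regimes and must be tracked separately. A related difficulty is that the natural estimate produced by the truncation is a modular inequality (involving $\Phi_G$, $\Phi_{s,G}$), while the target is a Luxemburg norm inequality; the passage between them is precisely what \eqref{cond} makes quantitative, with the $\Delta_2$ condition encapsulated by $\C = 2^{p^+}$ controlling how splitting sums as in \eqref{eq.delta2'} interacts with the dyadic summation. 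Condition \eqref{condi} enters only at the very end, to certify that $G^*$ is well-defined and that the tail integral at $0$ converges, which is what singles it out as the genuinely critical target.
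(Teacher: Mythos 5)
First, note that the paper does not prove this proposition at all: it is quoted from the reference \cite{BO} (see also \cite{ACLS} for the sharper version), so any complete argument you give is necessarily ``a different route from the paper.'' Your Maz'ya-type truncation strategy is a legitimate way to attack such embeddings, but as written it has two genuine gaps. The first is that the central estimate --- the ``fractional Orlicz--Poincar\'e-type inequality'' yielding $G\bigl(t/([u]_{s,G}|A_t|^{s/n})\bigr)|A_t|\lesssim 1$ --- is exactly the heart of the matter and is asserted rather than proved; in the non-homogeneous setting one cannot obtain it by scaling, and deriving it (say, by comparing the modular of $v_t$ against the nonlocal interaction between $A_{2t}$ and the set where $v_t=0$) is where conditions \eqref{cond}, \eqref{minmax2}--\eqref{minmax3} actually have to be put to work. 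The second, more serious, gap is the passage from the weak-type bounds to the strong (modular) bound. A weak-type estimate in terms of the \emph{full} seminorm $[u]_{s,G}$, inserted level by level into the layer-cake formula $\Phi_{G^*}(u/\lambda)=\int_0^\infty (G^*)'(r)\,|\{|u|/\lambda>r\}|\,dr$, does not ``collapse'' to the convergent integral $\int_0^1 G^{-1}(\tau)\tau^{-1-s/n}\,d\tau$: this is the classical fallacy that weak-type implies strong-type, and the resulting integral diverges logarithmically. What rescues Maz'ya's method is that the weak estimate must be applied to each dyadic truncation $v_k$ with its \emph{own} modular $\Phi_{s,G}(v_k)$ on the right-hand side, and then one sums using the superadditivity $\sum_k G(|D_s v_k|)\le G(|D_s u|)$ (telescoping of the truncations plus superadditivity of the convex function $G$ with $G(0)=0$). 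The ``$1$-Lipschitz stability of $[\cdot]_{s,G}$ under truncations'' that you invoke, i.e.\ $[v_t]_{s,G}\le[u]_{s,G}$, is strictly weaker and does not suffice; without the superadditivity step the dyadic sum does not close.

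Two smaller points. The intermediate inequality you aim for, $\|u\|_{G^*}\le C[u]_{s,G}$, is false for general $u\in W^{s,G}(\Omega)$: a nonzero constant has vanishing Gagliardo modular but nonzero $G^*$-norm, so the term $\|u\|_G$ in $\|u\|_{s,G}$ (or the restriction to $W^{s,G}_0(\Omega)$, where $[\cdot]_{s,G}$ is an equivalent norm) must genuinely enter your weak-type estimate, e.g.\ through a Chebyshev bound on $|A_t|$. Finally, the second half of \eqref{condi} is not merely decorative: the divergence of $\int_1^\infty G^{-1}(\tau)\tau^{-1-s/n}\,d\tau$ is what makes $(G^*)^{-1}$ in \eqref{G*1} surjective, hence $G^*$ a finite Young function, so it should be acknowledged where $G^*$ is first used rather than only ``at the very end.''
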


We point out that \eqref{condi}-\eqref{G*1} is not the most general setting under which embedding results for $W^{s,G}(\Omega)$ are known to hold; indeed, in \cite{ACLS} embeddings are proved for 
\[
G_{\frac{s}{n}}(t):=G\left(B^{-1}(t)\right)\quad\text{with}\quad B(t):=\left(\int_0^t\left(\frac{\tau}{G(\tau)}\right)^{\frac{s}{s-n}}\:d\tau\right)^{\frac{n-s}{n}}.
\]
and proven to be sharp; consequently $G_{\frac{s}{n}}$ is the critical (optimal) Sobolev conjugate of $G$. However, the simplicity of the formula \eqref{G*1} for $G^*$ allows us to simplify a lot of our arguments.

Throughout the paper, given a Young function $G$ we will denote 
\begin{equation}\label{eq.h}
H:=G^*\circ G^{-1},
\end{equation}
where $G^*$ is the Sobolev Young function defined in \eqref{G*1} and $G^{-1}$ is the inverse of $G$. 
Observe that $H$ defines a new Young function.

\begin{lema}\label{lem.H}
Assume that $g$ satisfies \eqref{cond.lib} and, moreover, that $p^+\le (p^-)^*$. Then, the function $H$ defined in \eqref{eq.h} is a Young function.
\end{lema}

\begin{proof}
The proof is done by brute force computations, so we only give a hint of the computations that need to be done.

We will be using a fact on the Sobolev conjugate function $G^*$ that is proved in \cite{Fukagai} (see also \cite{BO})
\begin{equation}\label{G*delta}
(p^-)^*\le \frac{(G^*)'(t)t}{G^*(t)}\le (p^+)^*
\end{equation}
Now, in order to show that $H$ is a Young function, the only nontrivial issue is to show that $H$ is convex. But this turns out to be equivalent to prove that $(H^{-1})''(t)\le 0$.

By direct computations, we arrive at
$$
(H^{-1})''(t) = g((G^*)^{-1}(t))\left(\frac{g'((G^*)^{-1}(t))}{g((G^*)^{-1}(t))}\left[\frac{G^{-1}(t)}{t^{1+s/n}}\right]^2 + \frac{1}{g(G^{-1}(t)) t^{1+s/n}} - \frac{(1+s/n) G^{-1}(t)}{t^{2+s/n}}\right).
$$

So $(H^{-1})''(t)\le 0$ is equivalent to
$$
\frac{g'((G^*)^{-1}(t))}{g((G^*)^{-1}(t))}\left[\frac{G^{-1}(t)}{t^{1+s/n}}\right]^2 + \frac{1}{g(G^{-1}(t)) t^{1+s/n}}\le \frac{(1+s/n) G^{-1}(t)}{t^{2+s/n}},
$$
which is also equivalent to
\begin{equation}\label{cond.H2da}
\frac{g'((G^*)^{-1}(t))}{g((G^*)^{-1}(t))} \frac{G^{-1}(t)}{t^{1+s/n}} + \frac{1}{g(G^{-1}(t)) G^{-1}(t)}\le \frac{(1+s/n)}{t}.
\end{equation}
Now, using \eqref{cond.lib} and \eqref{cond}, we have that
$$
\frac{g'((G^*)^{-1}(t))}{g((G^*)^{-1}(t))} \frac{G^{-1}(t)}{t^{1+s/n}} + \frac{1}{g(G^{-1}(t)) G^{-1}(t)}\le (p^+-1)\frac{G^{-1}(t)}{(G^*)^{-1}(t) t^{1+s/n}} + \frac{1}{p^-}\frac{1}{t}.
$$
So, \eqref{cond.H2da} will hold true if we have that
$$
(p^+-1)\frac{G^{-1}(t)}{(G^*)^{-1}(t) t^{1+s/n}} + \frac{1}{p^-}\frac{1}{t}\le \frac{(1+s/n)}{t},
$$
which in turn is equivalent to
$$
\frac{t[(G^*)^{-1}]'(t)}{(G^*)^{-1}(t)}\le \frac{(p^-)^*-1}{(p^+-1)(p^-)^*}.
$$
Finally, \eqref{G*delta} implies that
$$
\frac{t[(G^*)^{-1}]'(t)}{(G^*)^{-1}(t)}\le \frac{1}{(p^-)^*},
$$
so to finish the proof we just have to observe that $p^+\le (p^-)^*$ implies that
$$
\frac{1}{(p^-)^*}\le \frac{(p^-)^*-1}{(p^+-1)(p^-)^*}
$$
and so the proof is complete.
\end{proof}

\medskip

Weak solutions of \eqref{eq.main} satisfy
\begin{equation}\label{eq.weak}
\langle (-\Delta_g)^s u, v\rangle = \lambda \int_\Omega g(u)v\,dx\quad\text{ for any }v\in W^{s,G}_0(\Omega)
\end{equation}
where
\[
\langle (-\Delta_g)^s u, v\rangle :=\iint_{\R^n\times\R^n} g\left(\frac{u(x)-u(y)}{|x-y|^s}\right)\frac{(v(x)-v(y))}{|x-y|^{n+s}}\:dxdy.
\]
%If we recall \eqref{eq.squotient} and \eqref{eq.mu}, \eqref{eq.weak} equality can be written in the more compact form
%\[
%\iint g\left(D_su(x,y)\right)D_sv(x,y)\:d\mu=\int_\Omega f(u)v\,dx.
%\]

\section{Some technical results}\label{sec.tech} 

The purpose of this section is to gather some technical and useful inequalities which are the key of our argument. Recall \eqref{eq.h} for the definition of $H$. We start with the following lemma that is needed to bound the $H$ norm of the characteristic function of upper level sets of truncations:  
 
\begin{lema}\label{cotaK}
Let $G$ be a Young function satisfying \eqref{cond} and \eqref{condi} and $G^*$ be defined by \eqref{G*1}. If we define
\begin{align*}
K(t):=t(G^*\circ G^{-1})^{-1}\left(\tfrac1t\right)= t (G\circ (G^*)^{-1})\left(\tfrac1t\right)
\end{align*}
then there exists a some constant $C=C(n,s,p^\pm,\C)>0$ such that
\[
K(t)\leq C \max\{ t,  t^\frac{sq}{n} \} \quad\text{for all }t>0 \text{ and } q<p^-.
\]
\end{lema}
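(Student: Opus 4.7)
The plan is to first derive a pointwise bound for $(G^*)^{-1}(r)$ of the form $(G^*)^{-1}(r)\leq C\, G^{-1}(r)\,r^{-s/n}$, then translate it via the scaling properties \eqref{minmax2}--\eqref{minmax3} of $G$ into control on $K(t)=t\,G\bigl((G^*)^{-1}(1/t)\bigr)$, and finally split the analysis into the regimes $t\le 1$ and $t\ge 1$.

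For the first step I would note that \eqref{minmax3} with $\alpha=\tau/r\le 1$ gives $G^{-1}(\tau)\le G^{-1}(r)(\tau/r)^{1/p^+}$ on $[0,r]$, so plugging into \eqref{G*1} and using $sp^+<n$ to ensure the resulting integral converges yields
\[
(G^*)^{-1}(r)\leq G^{-1}(r)\,r^{-1/p^+}\int_0^r \tau^{1/p^+-(n+s)/n}\,d\tau \;=\; \tfrac{np^+}{n-sp^+}\,G^{-1}(r)\,r^{-s/n}.
\]
Setting $r=1/t$ and $v:=G^{-1}(1/t)$ (so that $G(v)=1/t$), this reads $(G^*)^{-1}(1/t)\le C\,v\,t^{s/n}$.

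Applying \eqref{minmax2} to $G(\alpha v)$ with $\alpha:=C\,t^{s/n}$ then yields
\[
K(t)\;\le\; t\,G(\alpha v)\;\le\; t\,G(v)\,\max\{\alpha^{p^-},\alpha^{p^+}\}\;=\;\max\{\alpha^{p^-},\alpha^{p^+}\},
\]
since $tG(v)=1$. For $t\le 1$ one has $t^{sp^+/n}\le t^{sp^-/n}$, so the right hand side is bounded by $C\,t^{sp^-/n}\le C\,t^{sq/n}$ whenever $q<p^-$; for $t\ge 1$ one has $\alpha\ge 1$, so the $\alpha^{p^+}$ term dominates and the right hand side is at most $C\,t^{sp^+/n}\le C\,t$ by the subcriticality $sp^+<n$. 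Combining the two regimes gives $K(t)\le C\max\{t,\,t^{sq/n}\}$, as claimed.

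I expect the main technical difficulty to be choosing the correct sharp pointwise estimate for $(G^*)^{-1}(r)$. Crude alternatives -- for instance, bounding $G^{-1}(\tau)$ globally by $\tau^{1/p^\pm}$ and splitting the integral at $\tau=1$ -- only recover the ``Sobolev'' exponents $np^\pm/(n-sp^\pm)$ separately and do not in general yield the correct asymptotic $t^{sp^-/n}$. The localization inequality $G^{-1}(\tau)\le G^{-1}(r)(\tau/r)^{1/p^+}$, which measures $G^{-1}$ against its value at the right endpoint rather than at $\tau=1$, is precisely what makes the two bounds interact correctly through \eqref{minmax2} to produce the required estimate on $K$.
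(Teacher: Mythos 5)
Your proof is correct, but it follows a genuinely different route from the paper's. The paper keeps the integral form of $(G^*)^{-1}(1/t)$, splits it at $\tau=1$ using the $\Delta_2$ inequality \eqref{eq.delta2'}, and controls $\int_1^{1/t}\frac{G^{-1}(\tau)}{\tau^{1+s/n}}\,d\tau$ only through the monotonicity of $\tau\mapsto G^{-1}(\tau)\tau^{-s/n}$ (which is where $sp^+<n$ enters there); this produces a logarithmic factor $\log(1/t)$ that must be absorbed by sacrificing an arbitrarily small power $t^{s\ve/n}$, and that loss is exactly why the paper's statement only reaches exponents $sq/n$ with $q<p^-$. You instead exploit the stronger pointwise decay $G^{-1}(\tau)\le G^{-1}(r)(\tau/r)^{1/p^+}$ coming from \eqref{minmax3}, integrate the resulting power directly in \eqref{G*1} (convergent precisely because $sp^+<n$), and obtain the log-free estimate $(G^*)^{-1}(r)\le \tfrac{np^+}{n-sp^+}G^{-1}(r)r^{-s/n}$; combined with \eqref{minmax2} and $tG(G^{-1}(1/t))=1$ this yields $K(t)\le C\max\{t^{sp^-/n},t^{sp^+/n}\}$, which is slightly sharper than the paper's conclusion (it covers $q=p^-$ and avoids the $\ve$-loss) and treats both regimes $t\le 1$ and $t\ge1$ uniformly without needing the $\Delta_2$ splitting. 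One remark: like the paper's own proof, your argument uses $sp^+<n$, which is not literally listed among the lemma's hypotheses (only \eqref{cond} and \eqref{condi} are), but it is assumed in Theorem \ref{main} and used in the paper's proof of the lemma as well, so this is not a gap relative to the paper; if you wanted to rely only on \eqref{condi}, your power-integral step would be the one that breaks down, whereas the paper's splitting still needs the monotonicity of $G^{-1}(\tau)\tau^{-s/n}$, so neither argument is more general on this point.
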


\begin{proof}

Using the expression of $G^*$ and \eqref{condi} we have that, for $t\geq 1$, 
$$
K(t)= tG\left( \int_0^{\frac1t} \frac{G^{-1}(\tau)}{\tau^{1+\frac{s}{n}}}\:d\tau \right) \leq tC_1 \qquad \text{ with } C_1:=G\left(\int_0^1 \frac{G^{-1}(\tau)}{\tau^{1+\frac{s}{n}}}\,d\tau\right).
$$
Let us deal now with the case $t<1$. From \eqref{eq.delta2'} we get
\begin{align*}
K(t)&= tG\left(\int_0^1 \frac{G^{-1}(\tau)}{\tau^{1+\frac{s}{n}}}\,d\tau+ \int_1^{\frac1t} \frac{G^{-1}(\tau)}{\tau^{1+\frac{s}{n}}}\,d\tau\right)  \\
	& \leq \frac{\C}{2}t\left(G\left(\int_0^1 \frac{G^{-1}(\tau)}{\tau^{1+\frac{s}{n}}}\,d\tau\right)+G\left(\int_1^{\frac{1}{t}}\frac{G^{-1}(\tau)}{\tau^{1+\frac{s}{n}}}\,d\tau\right)\right)  :=(a)+(b).
\end{align*}
As before, $(a)\leq \frac{\C}{2} C_1 t$. 

In order to bound $(b)$, observe that
\begin{equation} \label{crece}
G^{-1}(\tau)\tau^{-\frac{s}{n}} \text{ is an increasing function for all }\tau >0
\end{equation}
whenever $sp^+<n$. Indeed, by using the change of variable $w=G(\tau)$, the last assertion is equivalent to $w G(w)^{-\frac{s}{n}}$ to be increasing; this, in turn, is impllied by \eqref{cond} and the assumption that that $p^+<\frac{n}{s}$ as we can compute
$$
(w G(w)^{-\frac{s}{n}})' = G(w)^{-\frac{s}{n}}-\frac{sw}{n}  G(w)^{-\frac{s}{n}-1}g(w)   
$$
and this expression is positive as $$\frac{wg(w)}{G(w)}\leq p^+<\frac{n}{s}.$$

Therefore, from \eqref{crece} we have that
\begin{align*}
tG\left(\int_1^{\frac{1}{t}}\frac{G^{-1}(\tau)}{\tau^{1+\frac{s}{n}}}\,d\tau\right) 
&\leq 
tG\left( G^{-1}\left(\frac1t\right)t^{\frac{s}{n}}\int_1^{\frac{1}{t}} \tau^{-1 }\,d\tau\right)\\
&\leq
tG\left( G^{-1}\left(\frac1t\right)t^{\frac{s(1-\ve)}{n}} t^{\frac{s\ve}{n}} \log\left(\frac1t\right) \right)
\end{align*}
where $\ve>0$ is arbitrary. Since $t^{\frac{s\ve}{n}} \log\left(\frac1t\right) \leq C_\ve$, from the last expression together with  \eqref{minmax2} and the fact that $t<1$ we get
\begin{align*}
tG\left(\int_1^{\frac{1}{t}}\frac{G^{-1}(\tau)}{\tau^{1+\frac{s}{n}}}\,d\tau\right) 
&\leq \tilde C_\ve
t^{1+\frac{sp}{n}(1-\ve)p^- }G\left( G^{-1}\left(\frac1t\right)  \right) \leq \tilde C_\ve t^\frac{sq}{n}
\end{align*}
for any $q<p^-$.

Putting together the bounds for $(a)$ and $(b)$ we get the desired estimate.
\end{proof}

Next we get a bound for the $H$ norm of $G(u)$ in terms of the $G^*$ norm:	

\begin{lema} \label{lema.norma}
Let $G$ be a Young function satisfying \eqref{cond}. Then for any $u\in L^G(\Omega)$ we have that
$$
\|G(u)\|_H \leq  \max\{ \|u\|_{G^*}^{p^+}, \|u\|_{G^*}^{p^-}\}.
$$
\end{lema}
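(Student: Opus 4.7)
The plan is to unwind the definitions: compute the modular $\int_\Omega H(G(|u|)/\nu)\,dx$ for the candidate value $\nu:=\max\{\|u\|_{G^*}^{p^+},\|u\|_{G^*}^{p^-}\}$, show it is bounded by $1$, and then conclude via the definition of the Luxemburg norm $\|\cdot\|_H$.

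Concretely, set $\lambda:=\|u\|_{G^*}$, so that by definition of the Luxemburg norm and the $\Delta_2$ condition on $G^*$ we have $\int_\Omega G^*(|u|/\lambda)\,dx \leq 1$. Since $H=G^*\circ G^{-1}$, I would write
\[
H\!\left(\frac{G(|u|)}{\nu}\right) = G^*\!\left(G^{-1}\!\left(\frac{G(|u|)}{\nu}\right)\right),
\]
and the task reduces to the pointwise inequality $G^{-1}(G(|u|)/\nu) \leq |u|/\lambda$, after which monotonicity of $G^*$ finishes the proof.

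To obtain this pointwise bound, I would apply \eqref{minmax3} with $\alpha=1/\nu$ and $t=G(|u|)$, yielding
\[
G^{-1}\!\left(\frac{G(|u|)}{\nu}\right) \leq |u| \cdot \max\bigl\{\nu^{-1/p^-},\nu^{-1/p^+}\bigr\}.
\]
Now split into two cases according to whether $\lambda\geq 1$ or $\lambda<1$. If $\lambda\geq 1$ then $\nu=\lambda^{p^+}\geq 1$, so the maximum equals $\nu^{-1/p^+}=\lambda^{-1}$; if $\lambda<1$ then $\nu=\lambda^{p^-}<1$, so the maximum equals $\nu^{-1/p^-}=\lambda^{-1}$. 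Either way we obtain exactly $G^{-1}(G(|u|)/\nu)\leq |u|/\lambda$.

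Putting everything together,
\[
\int_\Omega H\!\left(\frac{G(|u|)}{\nu}\right)dx \leq \int_\Omega G^*\!\left(\frac{|u|}{\lambda}\right)dx \leq 1,
\]
whence $\|G(u)\|_H \leq \nu$, which is the desired inequality. The main subtlety is choosing $\nu$ correctly to absorb both the $\lambda\geq 1$ and $\lambda<1$ regimes simultaneously, which is exactly why the estimate must be phrased as the maximum of the two powers $\lambda^{p^\pm}$; the rest is a direct application of \eqref{minmax3}.
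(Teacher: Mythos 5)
Your proof is correct and follows essentially the same route as the paper: both rest on the pointwise estimate from \eqref{minmax3} to compare the modular of $G(u)/\nu$ with $\int_\Omega G^*(|u|/\|u\|_{G^*})\,dx\leq 1$ and then invoke the definition of the Luxemburg norm. The only cosmetic difference is that you treat the regimes $\|u\|_{G^*}\geq 1$ and $\|u\|_{G^*}<1$ simultaneously through the choice $\nu=\max\{\|u\|_{G^*}^{p^+},\|u\|_{G^*}^{p^-}\}$, whereas the paper writes out the first case and declares the second analogous (and your parenthetical appeal to $\Delta_2$ is unnecessary, since $\int_\Omega G^*(|u|/\|u\|_{G^*})\,dx\leq 1$ holds for any Young function).
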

\begin{proof}
When $\|u\|_{G^*}\geq 1$,   \eqref{minmax3} and the definition of the Luxemburg norm yield that
\begin{align*}
\int_\Omega H\left( \frac{G(u)}{\|u\|_{G^*}^{p^+}} \right)\,dx = \int_\Omega G^*\circ G^{-1}\left( \frac{G(u)}{\|u\|_{G^*}^{p^+}} \right)\,dx \leq \int_\Omega G^* \left( \frac{u}{\|u\|_{G^*}} \right)\,dx =1
\end{align*}
which gives $\|G(u)\|_H\leq \|u\|_{G^*}^{p^+}$. The case $\|u\|_{G^*}<1$ is analogous.
\end{proof}

The following lemma relates norms with modulars of weak solutions to \eqref{eq.main}. This is crucial as the H\"older inequality is given in terms of norms, whereas the notion of weak solutions is related to modulars.

\begin{lema}\label{ecuacion}
If $u\in W^{s,G}(\Omega)$  satisfies that
$$
\iint_{\R^{n}\times\R^n} G(|D_s u|)\,d\mu \leq  M
$$
for some $M\geq 1$, then it holds that
$$
[u]_{s,G} \leq  M^\frac{1}{p^-}.
$$
\end{lema}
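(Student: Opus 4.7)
The plan is to pick $\lambda = M^{1/p^-}$ as a candidate in the infimum defining $[u]_{s,G}$ and verify that this $\lambda$ satisfies the defining inequality $\Phi_{s,G}(u/\lambda) \leq 1$. Once that is done, by definition of the infimum we immediately conclude $[u]_{s,G} \leq M^{1/p^-}$.

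The key tool is the scaling estimate \eqref{minmax2}. Since $M \geq 1$ we have $\lambda \geq 1$, hence $\alpha := 1/\lambda \in (0,1]$. For such $\alpha$, because $p^- < p^+$, one has $\max\{\alpha^{p^-},\alpha^{p^+}\}=\alpha^{p^-}$, and therefore
\[
G\!\left(\tfrac{|D_s u|}{\lambda}\right) \;=\; G(\alpha |D_s u|) \;\leq\; \alpha^{p^-} G(|D_s u|) \;=\; \lambda^{-p^-} G(|D_s u|).
\]
Integrating this pointwise bound against the measure $d\mu = |x-y|^{-n}\,dx\,dy$ over $\R^n\times\R^n$ yields
\[
\Phi_{s,G}\!\left(\tfrac{u}{\lambda}\right) \;\leq\; \lambda^{-p^-}\,\Phi_{s,G}(u) \;\leq\; \lambda^{-p^-} M.
\]
With the particular choice $\lambda = M^{1/p^-}$ we get $\lambda^{-p^-}M = M^{-1}\cdot M = 1$, so $\Phi_{s,G}(u/\lambda)\leq 1$, which is exactly what is needed.

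There is really no serious obstacle here; the only thing to be careful about is the direction of the inequality in \eqref{minmax2} (one must use the $\alpha \leq 1$ branch, which requires $M \geq 1$ to ensure $\lambda \geq 1$). The hypothesis $M \geq 1$ in the statement is precisely what makes this step legitimate, and it also makes the resulting bound $M^{1/p^-}$ consistent with the sub-homogeneous scaling governed by the lower growth exponent $p^-$.
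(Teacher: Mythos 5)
Your proof is correct and follows essentially the same route as the paper: apply \eqref{minmax2} with $\alpha = M^{-1/p^-}\leq 1$ to show $\Phi_{s,G}(u/M^{1/p^-})\leq 1$, then invoke the definition of $[\cdot]_{s,G}$. The paper phrases the scaling step as an inequality on integrals rather than pointwise, but the substance is identical.
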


\begin{proof}
Since $M \geq 1$, from \eqref{minmax2} we have
\begin{align*}
1&\geq \frac{1}{M} \iint_{\R^{n}\times\R^n} G(|D_s u|)\,d\mu 
\geq 
\iint_{\R^{n}\times\R^n}  G\left(M^{-\frac{1}{p^-}} |D_s u| \right)\,d\mu.
\end{align*}
 Then,  the definition of $\left[ \cdot\right]_{s,G}$ implies  that $\left[ u \right]_{s,G} \leq  M^\frac{1}{p^-}$ as desired.
\end{proof}

We also have the general version of the Chebyshev's inequality:

\begin{lema} \label{chebyshev} %https://sites.math.washington.edu/~morrow/336_17/papers17/george.pdf teo 3.1
Let $G$ be a real valued, measurable in $\Omega$, nonnegative and nondecreasing function. For any $u$ measurable in $\Omega$ and real valued and $t>0$ we have
$$
|\{x\in\Omega\colon u(x)\geq t \}| \leq \frac{1}{G(t)}\int_\Omega G(u(x))\,dx.
$$
\end{lema}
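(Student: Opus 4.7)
The statement is the classical generalized Chebyshev (Markov) inequality adapted to a nondecreasing ``weight'' function $G$, so the proof should be very short and entirely elementary; no structural properties of Young functions are used.

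The plan is to localize the integral to the super-level set and then exploit monotonicity of $G$ to pull out a constant lower bound. Concretely, set $A_t:=\{x\in\Omega : u(x)\geq t\}$. Since the integrand $G\circ u$ is nonnegative on $\Omega$, we can estimate
\[
\int_\Omega G(u(x))\,dx \geq \int_{A_t} G(u(x))\,dx.
\]
Now use that $G$ is nondecreasing: for $x\in A_t$ we have $u(x)\geq t$, hence $G(u(x))\geq G(t)$. Therefore
\[
\int_{A_t} G(u(x))\,dx \geq G(t)\,|A_t|.
\]
Combining the two displays and dividing by $G(t)$ (which we may assume positive, since otherwise the stated inequality is vacuous in the sense that the right-hand side is $+\infty$, or becomes the trivial $|A_t|\leq\infty$) yields the claim.

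There is no real obstacle here; the only minor subtlety is handling the degenerate case $G(t)=0$, which is trivial because monotonicity and nonnegativity of $G$ give $0\leq G(t)$ everywhere, and the right-hand side of the stated inequality should be interpreted as $+\infty$ (or the bound is vacuous). Measurability of $A_t$ follows from measurability of $u$, so the integrals above are well defined. The argument does not invoke the doubling condition \eqref{cond}, the growth hypothesis \eqref{condi}, nor convexity of $G$, which is why the hypothesis on $G$ in the statement is weaker than the running assumption elsewhere in the paper.
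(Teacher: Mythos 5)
Your proof is correct and is essentially identical to the paper's: both restrict the integral to the super-level set $A_t$, use monotonicity of $G$ to bound $G(u(x))\geq G(t)$ there, and conclude. The only difference is presentational (you work forward from the integral, the paper works forward from $G(t)\lvert A_t\rvert$), and your extra remarks on the degenerate case and measurability are sound but not needed.
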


\begin{proof}
The proof is the usual one:
\[
G(t)|\{x\in\Omega\colon u(x)\geq t \}|=\int_{\{x\in\Omega\colon u(x)\geq t \}} G(t)\:dx \leq \int_{\{x\in\Omega\colon u(x)\geq t \}}G(u(x))\,dx\leq \int_\Omega G(u(x))\,dx.
\]
\end{proof}

We close this section with a simple real analysis result regarding sequences that satisfy a nonlinear recurrence relationship:
\begin{lema}\label{sequence}
Let $\{a_k\}_k$ be a sequence of nonnegative real numbers and assume that exist $\bar{C},\tilde{C}>0$ and $\delta\in(0,1)$ such that
\begin{equation}\label{recur}
a_{k+1}\leq \bar{C}\tilde{C}^{k+1}a_k^{1+\delta}, \quad k\geq0.
\end{equation}

Then there exists $\varepsilon_0>0$ such that 
\[
a_0\leq \varepsilon_0\quad\Rightarrow\quad\lim_{k\rightarrow\infty}a_k=0.
\] 
\end{lema}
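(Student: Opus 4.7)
The plan is to run a standard geometric decay ansatz: look for constants $A>0$ and $r\in(0,1)$, depending only on $\bar C$, $\tilde C$ and $\delta$, such that the bound
\[
a_k\leq A r^k\qquad\text{for all }k\geq 0
\]
can be propagated by induction from $k$ to $k+1$. If this succeeds, then the hypothesis $a_0\leq A$ (to be built into $\varepsilon_0$) gives the base case, and the bound forces $a_k\to 0$ since $r<1$.

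To close the induction, I would feed the ansatz into the recurrence \eqref{recur}: assuming $a_k\leq A r^k$,
\[
a_{k+1}\leq \bar C\tilde C^{k+1}(A r^k)^{1+\delta}=\bar C\tilde C\, A^{1+\delta}r^{-1}\bigl(\tilde C r^\delta\bigr)^k\cdot A^{-1}\cdot A r^{k+1},
\]
so the inductive step reduces to checking
\[
\bar C\tilde C\, A^{\delta}r^{-1}\bigl(\tilde C r^\delta\bigr)^k\leq 1\qquad\text{for every }k\geq 0.
\]
The key is to pick $r$ that kills the $k$-dependence: if $\tilde C\leq 1$ any $r\in(0,1)$ gives $\tilde C r^\delta\leq 1$, and otherwise I would set $r:=\tilde C^{-1/\delta}\in(0,1)$ so that $\tilde C r^\delta=1$. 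With such an $r$, the inequality above becomes the $k$-independent requirement
\[
A^{\delta}\leq \frac{r}{\bar C\tilde C}.
\]

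It then suffices to choose
\[
\varepsilon_0:=\left(\frac{r}{\bar C\tilde C}\right)^{1/\delta}
\]
and set $A:=\varepsilon_0$. Then $a_0\leq\varepsilon_0=A=Ar^0$ gives the base case, the computation above gives the inductive step, and hence $a_k\leq \varepsilon_0 r^k\to 0$. There is really no serious obstacle here; this is a textbook real-analysis lemma, and the only moderately delicate point is making sure the choice of $r$ is legal (i.e.\ lies in $(0,1)$) regardless of whether $\tilde C$ is large or small, which the case split above takes care of.
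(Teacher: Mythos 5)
Your proof is correct, and it takes a genuinely different route from the paper's. The paper unrolls the recurrence directly: substituting each bound into the next produces an explicit estimate of the form $a_k\leq \bar C^{\delta}\tilde C^{1+\delta}\bigl(\bar C\tilde C\,a_0^{1+\delta}\bigr)^k$, after which choosing $\varepsilon_0$ so that the base $\bar C\tilde C\,\varepsilon_0^{1+\delta}$ drops below $1$ gives the decay. You instead posit the geometric ansatz $a_k\leq Ar^k$ and close an induction, with the decisive move being the choice $r=\tilde C^{-1/\delta}$ (when $\tilde C>1$) that makes the factor $(\tilde C r^\delta)^k$ bounded in $k$; the remaining constraint on $A$ is then $k$-independent, and setting $\varepsilon_0=A=\bigl(r/(\bar C\tilde C)\bigr)^{1/\delta}$ finishes. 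The two mechanisms are morally the same --- both establish geometric smallness once $a_0$ is small --- but your induction avoids the exponent bookkeeping of the unrolling; carried out exactly, that unrolling would show the exponent on $a_0$ after $k$ steps is $(1+\delta)^k$ rather than $k(1+\delta)$, giving doubly-exponential decay. That is a quantitative strengthening your approach does not record, but it is irrelevant here since the lemma asks only for $a_k\to 0$.
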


\begin{proof}
We start by iterating \eqref{recur}:
\begin{align*}
& a_1\leq \bar{C}\tilde{C}a_0^{1+\delta} \\
& a_2\leq \bar{C}\tilde{C}^2a_1^{1+\delta} \leq \bar{C}^{2+\delta}\tilde{C}^{3+\delta}a_0^{2(1+\delta)}\\
& \vdots \\
& a_k\leq \bar{C}^{k+\delta}\tilde{C}^{k+1+\delta}a_0^{k(1+\delta)}=\bar{C}^\delta\tilde{C}^{1+\delta}\left(\bar{C}Ca_0^{1+\delta}\right)^k.
\end{align*}
Therefore, if $a_0\leq \varepsilon_0<\left(\frac{1}{\bar{C}\tilde{C}}\right)^{\frac{1}{1+\delta}}$ we get
$\lim_{k\rightarrow\infty}a_k=0$ as desired.
\end{proof}

\section{Proof of the main results}\label{sec.main}

We start this section with the proof of our main result, namely Theorem \ref{main}. The strategy is to use a De Giorgi-type argument to get that soltutions with small enough modular are bounded.

\begin{proof}[Proof of Theorem \ref{main}]
The proof follows De Giorgi's $L^2$ implies $L^\infty$ scheme; we are going to show that if
\begin{equation}\label{equation}
\iint G(D_s u)\,d\mu \leq c \lambda \int_\Omega G(u)\,dx
\end{equation}
for some constant $c=c(p^+,p^-)$, then there exists $\varepsilon_0>0$ such that 
\begin{equation}\label{l2tolinfty}
\int_\Omega G(u)=\mu\leq \varepsilon_0\quad\Rightarrow \quad \|u\|_{L^\infty(\Omega)} \leq 1.
\end{equation}
Note that \eqref{equation} is readily implied by \eqref{eq.weak} (taking $v=u$) and \eqref{cond}:
\begin{align*}
\iint G(D_s u)\,d\mu & \leq \frac{1}{p^-}\iint g(D_s u)D_s u\,d\mu  \\
						  & = \frac{1}{p^-}	\lam \int_\Omega g(u)u\,dx \\
						  & \leq \frac{p^+}{p^-}	\lam \int_\Omega G(u)\,dx.  
\end{align*}

Also, \eqref{l2tolinfty} implies the general result by scaling: if $\mu>\varepsilon_0$ we can rescale:

%\textcolor{blue}{
%ac\'a la constante $C$ se puede elegir independiente de $\|u\|_\infty$, y dependiendo solo de $\mu$ y $\ve_0$ (aunque $\ve_0$ de cierto modo depende de $u$), porque sino llegar\'iamos a que $\|u\|_\infty\leq C$ pero $C$ puede ser de nuevo $\|u\|_\infty$, no?
%}
%\textcolor{red}{S\'i, ac\'a $C$ depende \'unicamente de $\mu$ y $\varepsilon_0$}
%
\[
\int_\Omega G\left(\frac{u}{C}\right)\:dx\leq \max\left\{C^{-p^-},C^{-p^+}\right\}\mu\leq \varepsilon_0
\]
by taking $C$ sufficiently large depending only on $\mu$ and $\varepsilon_0$ . Notice that $u/C$ fulfills that
\begin{align*}
\iint_{\R^{n}\times\R^n} G\left(\left|D_s \left(\frac{u}{C}\right)\right|\right)\,d\mu & \leq  \max\left\{C^{-p^-},C^{-p^+}\right\}\iint_{\R^{n}\times\R^n} G(|D_su|)\,d\mu \\
																		  &	\leq  \max\left\{C^{-p^-},C^{-p^+}\right\} c \lam \int_\Omega G(u)\,dx \\
																		  &	\leq  \frac{\max\left\{C^{-p^-},C^{-p^+}\right\}}{\min\left\{C^{-p^-},C^{-p^+}\right\}}c\lam \int_\Omega G\left(\frac{u}{C}\right)\,dx \\
																		  &	=: C_0\lam \int_\Omega G\left(\frac{u}{C}\right)\,dx.
\end{align*}
Then, by \eqref{l2tolinfty} we have
\[
\left\|u\right\|_{L^\infty(\Omega)}\leq C
\]
and we get the desired result.

Let us prove \eqref{l2tolinfty}. For any $k\in\N$ consider the function $w_k\in W^{s,G}_0(\Omega)$ defined as
$$
w_k:=(u-(1-2^{-k}))_+.
$$
It is easy to see that these functions fulfill the following properties
\begin{align} \label{des0}
\begin{split}
&w_{k+1}(x) \leq w_k(x) \quad\text{a.e. in }\R^n,\\
&\{w_{k+1}>0\}\subset \{w_k> 2^{-(k+1)}\}.
\end{split}
\end{align}
We further claim that:
\begin{equation} \label{des00}
u\leq (2^{k+1}-1) w_k \quad\text{in } \{w_{k+1}>0\}.
\end{equation}
Indeed, notice that $w_{k+1}(x)>0$ implies $u(x)>1-2^{-(k+1)}$ and that 
\[
2^{k+1}-1=\frac{1-2^{-(k+1)}}{1-2^{-(k+1)}-(1-2^{-k})}
\]
and compute
\begin{align*}
(2^{k+1}-1)w_k(x) & = (2^{k+1}-1)\left(u(x)-(1-2^{-k})\right) \\
				  & = \frac{1-2^{-(k+1)}}{1-2^{-(k+1)}-(1-2^{-k})}u(x)-\frac{(1-2^{-(k+1)})(1-2^{-k})}{1-2^{-(k+1)}-(1-2^{-k})}\\
				  & = u(x)+\frac{1-2^{-k}}{1-2^{-(k+1)}-(1-2^{-k})}u(x)-\frac{(1-2^{-(k+1)})(1-2^{-k})}{1-2^{-(k+1)}-(1-2^{-k})}\\
				  & = u(x)+2^{k+1}(1-2^{-k})\left(u(x)-(1-2^{-(k+1)})\right) >u(x),
\end{align*} 
so \eqref{des00} holds.

Now, since $0\leq w_k \leq |u| + 1 \in L^G(\Omega)$ and 
\[
\lim_{k\to\infty} w_k = (u-1)_+,
\]  
by the Dominated Convergence Theorem one gets that
\begin{equation}\label{eq.dominated}
\lim_{k\to\infty} \int_\Omega G(w_k)\,dx = \int_\Omega G((u-1)_+)\,dx.
\end{equation}

We want to get a recursive bound of the form
\begin{equation}\label{eq.bound}
\int_{\Omega} G(w_{k+1})\,dx \leq C_k\left(\int_{\Omega} G(w_k)\,dx\right)^{1+\delta}
\end{equation}
for some $\delta>0$ and some (increasing) sequence of constants $C_k>0$. Indeed, \eqref{eq.bound} is exactly condition \eqref{recur} in Lemma \ref{sequence} so its proof would imply
\[
\int_{\Omega} G(u_+)\,dx=\int_{\Omega} G(w_0)\,dx \leq \varepsilon_0\quad \Rightarrow\quad \lim_{k\rightarrow\infty}\int_\Omega G(w_k)\,dx =0.
\]
Finally this combined with \eqref{eq.dominated}, implies 
\[
u\leq 1\quad\text{ a.e. in } \Omega.
\]
Replacing $u$ by $-u$ we get the other bound.

To prove \eqref{eq.bound} we start with the following inequality:
\begin{equation}\label{desig}
g\left(\frac{v(x)-v(y)}{|x-y|^s}\right)\left(\frac{v_+(x)-v_+(y)}{|x-y|^s}\right)\geq p^-G\left(\frac{v_+(x)-v_+(y)}{|x-y|^s}\right).
\end{equation}
Indeed, we may assume without loss of generality that $v(x)\geq v(y)$. If $x,y\in\{v>0\}$ then \eqref{desig} is just \eqref{cond}. If $x\in\{v>0\}$ and $y\in\{v\leq 0\}$ then we use the fact that $g$ is increasing and \eqref{cond} to get
\[
g\left(\frac{v(x)-v(y)}{|x-y|^s}\right)\left(\frac{v_+(x)-v_+(y)}{|x-y|^s}\right)\geq g\left(\frac{v(x)}{|x-y|^s}\right)\left(\frac{v(x)}{|x-y|^s}\right) \geq p^-G\left(\frac{v_+(x)-v_+(y)}{|x-y|^s}\right)
\]
as desired.

Now we use \eqref{desig} with $v=w_{k+1}$ as follows:
\begin{align*}
\iint_{\R^{2n}} G(D_s w_{k+1})\,d\mu & = \iint_{\R^{2n}} G\left(\frac{w_{k+1}(x)-w_{k+1}(y)}{|x-y|^s}\right)\frac{dxdy}{|x-y|^n}  \\
									 & \leq \frac{1}{p^-}\iint_{\R^{2n}}g\left(\frac{u(x)-u(y)}{|x-y|^s}\right)\left(\frac{w_{k+1}(x)-w_{k+1}(y)}{|x-y|^s}\right)\frac{dxdy}{|x-y|^n}   \\ 
%									 & \leq \frac{1}{p^-}\iint_{\R^{2n}}g\left(D_su\right)\left(D_s w_{k+1}\right)\frac{dxdy}{|x-y|^{n}}   \\ 
									 & = \frac{\lam}{p^-} \int_{\Omega} g(u)w_{k+1} \,dx 
\end{align*}
where the last equality comes from testing the equation with $w_{k+1}$. Using this together with \eqref{des00}, \eqref{des0} and \eqref{cond} gives
\begin{align} \label{subsol}
\begin{split}
\iint_{\R^{2n}} G(D_sw_{k+1})\,d\mu &\leq  
\frac{\lam}{p^-} \int_{\Omega} g( (2^{k+1}-1)w_{k+1} ) \frac{(2^{k+1}-1)w_{k+1}}{2^{k+1}-1} \,dx \\
&\leq 
\frac{p^+}{p^-}\lam \int_{\Omega} G( (2^{k+1}-1)w_{k+1} ) \frac{1}{2^{k+1}-1} \,dx\\
&\leq 
\frac{p^+}{p^-} \lam (2^{k+1}-1)^{p^+-1}\int_{\Omega} G(w_{k+1}) \,dx.
\end{split}
\end{align}
Next, by using H\"older inequality for Orlicz spaces
\begin{equation}\label{eq.hold}
\int_{\Omega} G(w_{k+1})\,dx \leq2 \| G(w_{k+1})\|_H \| \chi_{\{w_{k+1}>0\}}\|_{\tilde H}  
\end{equation}
where $H=G^*\circ G$ and $\tilde{H}$ is its conjugate. To get a bound for the first factor we recall that 
\begin{align*}
\| \chi_{\{w_{k+1}>0\}}\|_{\tilde H}  \leq |\{w_{k+1}>0\}| H^{-1}\left(|\{w_{k+1}>0\}|^{-1}\right) = :K\left(|\{w_{k+1 >0}\}|\right)
\end{align*}
(see \cite{kufner1977function}, page 149) to get, using Lemma \ref{cotaK}, \eqref{des0},  Lemma \ref{chebyshev} and \eqref{minmax2}
\begin{align}\label{cota1} 
\begin{split}
\|\chi_{\{w_{k+1}>0\}}\|_{\tilde H}  &\leq   \bar{C} \kappa (|\{w_{k+1}>0\}|)\\
									 &\leq   \bar{C} \kappa (|\{w_k>2^{-(k+1)}\}|)\\
									 &\leq \bar{C} \kappa \left( \frac{1}{G( 2^{-(k+1)})} \int_{\Omega} G(w_{k})\,dx \right)\\
									 &\leq \bar{C}\kappa \left( \frac{1}{G(1) 2^{-(k+1)p^+}} \int_{\Omega} G(w_{k})\,dx \right)\\
									 &\leq \bar{C}\kappa \left(2^{p^+}\right)^{k+1}\kappa\left( \int_{\Omega} G(w_{k})\,dx \right)\\
									 &=:\bar{C}\tilde{C}^{k+1}\kappa\left( \int_{\Omega} G(w_{k})\,dx \right)\\
\end{split}
\end{align}
where $\kappa$ denotes the increasing function $\kappa(t)=\max\{t,t^\frac{sq}{n}\},\:q<p^-$, $t>0$. Recall that we normalized so that $G(1)=1$ without loss of generality.

%Here $p^\pm$ stands for either $p^+$ o $p^-$ depending on whether $|\{w_{k+1}>0\}|$ is bigger or smaller than 1 (note that this may vary but it stabilizes for $k$ large enough).

Now we need to bound the other term in \eqref{eq.hold}. For this, we use Lemma \ref{lema.norma}, Proposition \ref{embedding}, and Lemma \ref{ecuacion} applied to \eqref{subsol}
\begin{align*}
 \| G(w_{k+1})\|_H &\leq \max\{ \|w_{k+1}\|_{G^*}^{p^+},\|w_{k+1}\|_{G^*}^{p^-} \} \\
 &\leq C \max\{ [w_{k+1}]_{s,G}^{p^+}, [w_{k+1}]_{s,G}^{p^-} \}.
\end{align*}
%\textcolor{red}{el punto clave es este, que con esta elecci\'on de M para usar el lema \ref{lema.norma}, siempre podemos asumir que $M\geq 1$)} 
Applying Lemma \ref{ecuacion} to \eqref{subsol} with $M=\frac{p^+}{p^-} \lam (2^{k+1}-1)^{p^+-1}\int_{\Omega} G(w_{k+1}) \,dx$ gives 
$$
[w_{k+1}]_{s,G} \leq C(\lambda,p^\pm) 2^{\frac{(k+1)(p^+-1)}{p^-}} \left(\int_\Omega G(w_k) \right)^\frac{1}{p^-}.
$$
Observe that $C(\lambda,p^\pm) 2^{\frac{(k+1)(p^+-1)}{p^-}} >1$ for $k$ big enough.

The last two inequalities together give 
\begin{equation} \label{cota2}
 \| G(w_{k+1})\|_H\leq  \tilde{C}^{k}  \max\left\{ \int_\Omega G(w_k)\,dx, \left(\int_\Omega G(w_k)\,dx \right)^\frac{p^+}{p^-} \right\}
\end{equation}
where we can take $\tilde{C}$ the same as in \eqref{cota1} by making it bigger if necessary. 

Inserting \eqref{cota1} and \eqref{cota2} in \eqref{eq.hold} (and relabeling $\tilde{C}$ once more) we finally get
\[
\int_{\Omega} G(w_{k+1})\,dx \leq  \bar{C}\tilde{C}^{k+1} \kappa\left( \int_{\Omega} G(w_{k})\,dx \right)\max \left\{ \int_{\Omega}G(w_{k}) ,  \left( \int_{\Omega}G(w_{k}) \right)^{\frac{p^+}{p^-}} \right\}.
\]
We conclude noticing that both if $\int_{\Omega}G(w_{k})\leq1$ or $\int_{\Omega}G(w_{k})>1$ the previous inequality leads to 
\[
\int_{\Omega} G(w_{k+1})\,dx \leq \bar{C}\tilde{C}^{k+1} \left(  \int_{\Omega}G(w_{k}) \right)^{1+\delta}
\]
for some (possibly different) $\delta>0$\footnote{We point out that the sequence $\left\{\int_{\Omega} G(w_{k})\,dx\right\}_k$ is monotone and hence the possibility of different $\delta$s is not an issue.}. This proves \eqref{eq.bound} and hence the theorem. 

\end{proof}

Once Theorem \ref{main} is proven, it is rather immediate to get the

\begin{proof}[Proof of Corollary \ref{cor}]
Simply define 
\[
h(x):=\lambda g(u(x))
\]
which by Theorem \ref{main} belongs to $L^\infty(\Omega)$ and apply Theorem \ref{thm.reg}.
\end{proof}

Finally, the proof of Theorem \ref{thm.semilinear} is the same as that of Theorem \ref{main} and Corollary \ref{cor} mutatis mutandis.

\subsection*{Acknowledgements.} This work was partially supported by ANPCyT under grant PICT 2019-3530. All three authors are members of CONICET.

%%%%%%%%%%%%%%%%%%%%%%%%%%%%%%%%%%%%%%%%%%%%%%%%

%%%%%%%%%%%%%%%%%%%%%%%%%%%%%%%%%%%%%%%%%%%%%%%%

\end{document}